\theoremstyle{plain}
\newtheorem{theorem}{Theorem}[section]
\newtheorem{lemma}[theorem]{Lemma}
\newtheorem{proposition}[theorem]{Proposition}
\newtheorem{observation}[theorem]{Observation}
\theoremstyle{definition}
\newtheorem{definition}[theorem]{Definition}
\newtheorem{problem}{Problem}
\newtheorem{example}[theorem]{Example}
\numberwithin{equation}{section}
\def\ldiv{\backslash}
\def\rdiv{/}
\def\cl{\mathrm{cl}}
\def\mlt#1{\mathrm{Mlt}(#1)}
\def\inn#1{\mathrm{Inn}(#1)}
\def\tup#1{\boldsymbol{#1}}
\title{Three concepts of nilpotence in loops}
\author{\v Zaneta Semani\v sinov\'a}
\address{(\v Z.S.) Institute of Algebra, Faculty of Mathematics, Technische Universit\"{a}t Dresden, Germany}
\email{zaneta.semanisinova@tu-dresden.de}
\author{David Stanovsk\'y}
\address{(D.S.) Department of Algebra, Faculty of Mathematics and Physics, Charles University, Prague, Czechia}
\email{stanovsk@karlin.mff.cuni.cz}
\thanks{D. Stanovsk\'y partially supported by the GA\v CR grant 18-20123S and by the cooperation grant LTAUSA19070. \v Z. Semani\v sinov\'a partially supported by the DFG grant 467967530. }
\keywords{Loops, multiplication groups of loops, nilpotence, supernilpotence.}
\subjclass{20N05, 20F18}
\begin{document}

\begin{abstract}
We introduce the abstract concept of supernilpotence in loop theory, and relate it to existing concepts, namely, central nilpotence and nilpotence of the multiplication group. We prove that the class of supernilpotence is greater or equal than the class of nilpotence of the multiplication group, and combining existing results, we show that a finite loop is supernilpotent if and only if its multiplication group is nilpotent. We also provide a new exposition of a classical result and crucial ingredient, that loops with a nilpotent multiplication group are centrally nilpotent and admit a prime decomposition.
\end{abstract}

\maketitle

\section{Introduction}

Loops generalize groups by dropping the axiom of associativity \cite{Br-book,Pfl}, so, naturally, many concepts in early loop theory were developed in direct analogy to group theory. For example, the center of a loop is defined as the set of all elements that commute \emph{and associate} with every other element, and subsequently we obtain the concept of central series and of \emph{central nilpotence}. 

Groups of prime power order are nilpotent and finite nilpotent groups admit a prime decomposition, i.e., they decompose as a direct product of groups of prime power order. Both statements generalize to Moufang loops \cite{G2,GW} (the proofs are much harder!), but neither statement holds in general: every non-associative loop of prime order is not centrally nilpotent (since the order of the center divides the order of the loop) and there is a directly indecomposable nilpotent loop of order~6.

The path towards a loop theoretic concept of nilpotence is certainly not unique. For example, one could consider \emph{nilpotence of the multiplication group}. 
Bruck \cite{Br} proved that every loop with a nilpotent multiplication group is centrally nilpotent, but the converse fails. Wright \cite{Wr} proved that a finite loop with a nilpotent multiplication group admits a prime decomposition, i.e., it is a direct product of nilpotent loops of prime power order.

In late 1970s, universal algebra established an abstract concept of the commutator of congruences, and subsequently of nilpotence and solvability of general algebraic structures \cite{FM,HH,Sm}. Applied to loop theory, it is not difficult to verify that the universal algebraic concept of nilpotence coincides with central nilpotence, although the two notions solvability are quite different \cite{SV1,SV2}. 

Over the last decade, universal algebra introduced a stronger concept of nilpotence, called \emph{supernilpotence} \cite{AM,Bu}, which, under certain assumptions met by groups and loops, implies prime decomposition. 
%According to Wright's theorem, a finite loop is supernilpotent if and only if its multiplication group is nilpotent. However, it is unclear whether this holds for infinite loops, and it is also unclear what is the relation of the degrees of nilpotence. 
The purpose of the present note is to introduce the concept of supernilpotence into loop theory and to make initial observations on the relationship of the three concepts of nilpotence. 

Formal definitions will be presented in subsequent sections. To state the main theorem, let us introduce the following notation: for a loop $Q$, let
\begin{itemize}
	\item $\cl_{cn}(Q)$ denote the class of central nilpotence of $Q$,
	\item $\cl_{m}(Q)$ denote the class of nilpotence of $\mlt Q$,
	\item $\cl_{sn}(Q)$ denote the class of supernilpotence of $Q$.
\end{itemize}
If $Q$ fails to be centrally nilpotent, we will denote $\cl_{cn}(Q)=\infty$, \emph{et cetera} for the other properties.

\begin{theorem}\label{thm:main}
Let $Q$ be a loop. Then \[ \cl_{sn}(Q)\geq\cl_{m}(Q)\geq\cl_{cn}(Q). \]
Moreover, if $Q$ finite, then $\cl_{sn}(Q)<\infty$ if and only if $\cl_{m}(Q)<\infty$.
\end{theorem}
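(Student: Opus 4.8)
The plan is to verify the two inequalities in turn and then to deduce the finite equivalence from the first of them together with the classical structure theory that the paper re-exposes. The inequality $\cl_m(Q)\ge\cl_{cn}(Q)$ is Bruck's theorem \cite{Br} in quantitative form: if $\mlt Q$ is nilpotent of class $c$, then its upper central series $1=Z_0(\mlt Q)\le\cdots\le Z_c(\mlt Q)=\mlt Q$ induces, via the orbit congruences on $Q$, a central series of $Q$ of length $c$, whence $\cl_{cn}(Q)\le c$; for this I would rely on the re-exposition of that result given later in the paper.

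For the inequality $\cl_{sn}(Q)\ge\cl_m(Q)$ I would argue as follows. Assume $\cl_{sn}(Q)\le n$, i.e.\ the $(n+1)$-ary higher commutator $[1_Q,\dots,1_Q]$ equals $0_Q$. Fix ``types'' $\tau_1,\dots,\tau_{n+1}$, each in $\{L,R,L^{-1},R^{-1}\}$. Since each of the unary maps $x\mapsto L_x$, $x\mapsto R_x$ and their pointwise inverses is given by a loop term, the assignment $(x_1,\dots,x_{n+1},z)\mapsto\bigl[\tau_1(x_1),\dots,\tau_{n+1}(x_{n+1})\bigr](z)$, where the bracket denotes the left-normed commutator taken in the group $\mlt Q$, is a term operation $p(x_1,\dots,x_{n+1},z)$ of $Q$. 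Whenever some $x_i=1$, the translation $\tau_i(x_i)$ is the identity, so the commutator collapses and $p(x_1,\dots,x_{n+1},z)=z$. Now feed $p$ into the term condition expressed by $[1_Q,\dots,1_Q]=0_Q$, letting the $i$-th cube coordinate range over $\{1,d_i\}$ and treating $z$ as a parameter: all $2^{n+1}-1$ corners of the cube that have some coordinate equal to $1$ carry the common value $z$, and the term condition then forces the remaining corner to carry it too, i.e.\ $p(d_1,\dots,d_{n+1},z)=z$ for all $d_i,z$. Hence every left-normed commutator of weight $n+1$ in the left and right translations of $Q$ (and their inverses) acts as the identity; since these translations generate $\mlt Q$ and $\gamma_{n+1}(\mlt Q)$ is the normal closure of the weight-$(n+1)$ left-normed commutators of any generating set, we get $\gamma_{n+1}(\mlt Q)=1$, that is, $\cl_m(Q)\le n$.

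For the finite equivalence, $\cl_{sn}(Q)<\infty\Rightarrow\cl_m(Q)<\infty$ is a special case of the first inequality. Conversely, let $Q$ be finite with $\mlt Q$ nilpotent. By the theorems of Bruck and Wright \cite{Br,Wr} re-exposed in the paper, $Q\cong Q_1\times\cdots\times Q_r$ where each $Q_i$ is a nilpotent loop of prime power order $p_i^{k_i}$ with the $p_i$ pairwise distinct; each $\mlt Q_i$ is a homomorphic image of $\mlt Q$, hence nilpotent, and since $\inn Q_i$ is core-free of index $p_i^{k_i}$ in $\mlt Q_i$, the Sylow decomposition of the nilpotent group $\mlt Q_i$ forces $\mlt Q_i$ to be a $p_i$-group. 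It then suffices to invoke two facts from the theory of supernilpotence \cite{AM}: a finite loop whose multiplication group is a $p$-group is supernilpotent, and a finite direct product of supernilpotent loops is supernilpotent. Together these yield $\cl_{sn}(Q)<\infty$.

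I expect the principal obstacle to lie in this last step, namely supplying the fact that a finite loop with a $p$-group multiplication group is supernilpotent: here one genuinely needs the strength of the hypothesis that $\mlt Q_i$ is a $p_i$-group---which the Sylow argument extracts from nilpotence of $\mlt Q$---and not merely that $Q_i$ is a nilpotent loop of prime power order, for the weaker hypothesis cannot suffice (combined with the first inequality it would force every finite centrally nilpotent loop of prime power order to have a nilpotent multiplication group). A secondary point requiring care is the precise form of the higher-commutator term condition used in the second paragraph, together with the standard but easily misquoted description of $\gamma_{n+1}$ of a group as the normal closure of the weight-$(n+1)$ left-normed commutators of a generating set.
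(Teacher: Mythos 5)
Your overall architecture matches the paper's: the genuinely new content is the first inequality, proved by turning group commutators of translations into absorbing loop polynomials and invoking supernilpotence; the second inequality is Bruck's theorem; the finite equivalence combines Wright's prime decomposition with the Aichinger--Mudrinski--Kearnes theorem (Theorem \ref{t:supnilp_fin}) that a finite Mal'tsev algebra is supernilpotent iff it is a direct product of nilpotent algebras of prime power size. Within that, your realization of the first inequality differs in a worthwhile way: you take \emph{single} translations as commutator entries and then appeal to the group-theoretic fact that if all left-normed weight-$(n+1)$ commutators on a generating set vanish, then $\gamma_{n+1}=1$. That fact is correct (prove it by induction: each weight-$n$ commutator of generators centralizes the generating set, hence is central, and pass to $G/Z(G)$), and it lets you get away with ordinary absorption at the identity. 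The paper instead allows the commutator entries to be arbitrary elements of $\mlt Q$, encoded as ``m-words'' with tuples of loop parameters, so the resulting polynomial is $(k+1)$-\emph{batch}-absorbing; this avoids the group-theory lemma at the price of the (equivalent) batch version of the absorption criterion. Both routes are valid. For the second inequality you defer entirely to Bruck, whereas the paper supplies a full proof via the normalizer series of $\inn Q$; as a citation of a classical result this is acceptable, but it is the step you prove least.

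Your self-diagnosed ``principal obstacle'' in the finite equivalence is not an obstacle, and the reasoning behind it is wrong. You assert that the hypothesis ``$Q_i$ is a nilpotent loop of prime power order'' cannot suffice for supernilpotence of the factors, because it ``would force every finite centrally nilpotent loop of prime power order to have a nilpotent multiplication group.'' But that consequence is a true classical theorem: a finite loop is centrally nilpotent of order $p^k$ if and only if $\mlt Q$ is a $p$-group (Bruck; Proposition \ref{p:pgroup} in the paper), so such a multiplication group is automatically nilpotent. Consequently your detour through ``each $\mlt{Q_i}$ is a $p_i$-group'' --- itself correct, and your core-free-stabilizer argument for it is a nice alternative to the paper's inductive proof of that direction of Proposition \ref{p:pgroup} --- is redundant: Theorem \ref{t:supnilp_fin} applies directly to the decomposition into nilpotent loops of prime power order that Wright and Bruck already deliver, which is exactly how the paper concludes. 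Note also that your auxiliary ``fact'' that a finite loop with a $p$-group multiplication group is supernilpotent is not literally in \cite{AM}; to justify it one passes through ``such a loop is nilpotent of $p$-power order'' and then Theorem \ref{t:supnilp_fin} anyway, so the detour buys nothing.
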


%\begin{theorem}\cite{Wr}\label{t:wright}
% If $Q$ finite, then $\cl_{sn}(Q)<\infty$ if and only if $\cl_{m}(Q)<\infty$.
%\end{theorem}

The first inequality is a new result, the second inequality is due to Bruck \cite[p. 282, Corollary III to Theorem 8B]{Br} (he states it for finite loops,  however, this assumption is never used in the proof), and the last part is a combination of a loop theoretic result of Wright \cite[Theorem 1]{Wr} and universal algebraic results based on works of Kearnes \cite{Kea} and Aichinger and Mudrinski \cite[Section 7]{AM}. It is an open problem whether the assumption of finiteness is necessary.

%\begin{proposition}\cite{AE}\label{p:groups}
%Let $G$ be a group. Then $\cl_{sn}(G)=\cl_{m}(G)=\cl_{cn}(G)$.
%\end{proposition}

%Aichinger and Ecker \cite{AE} proved that, for a group $G$, $\cl_{sn}(G)=\cl_{m}(G)=\cl_{cn}(G)$. 
For a group $G$, $\cl_{sn}(G)=\cl_{cn}(G)$, and thus the three classes of nilpotence coincide \cite{AE,SV-sn}. 
For loops, the classes may be different: there is a loop $Q$ such that
\begin{itemize}
	\item $|Q|=6$, $\cl_{cn}(Q)=2$, $\cl_m(Q)=\infty$ (Example \ref{ex:6}). 
	\item $|Q|=8$, $\cl_{cn}(Q)=2$, $\cl_m(Q)=3$, $\cl_{sn}(Q)\geq4$ (Example \ref{ex:8}).
\end{itemize}

The structure of the paper is as follows.
In Section \ref{sec:ua}, we introduce the abstract concepts of nilpotence and supernilpotence.
In Section \ref{sec:loops}, we explain how these concepts are realized in loops.
In Section \ref{sec:proofs}, we prove Theorem \ref{thm:main}. Although two of the three parts of the theorem are not new, we present a complete proof, since parts of the original proofs are scattered around several old papers written in a somewhat outdated style, and they are omitted in recent surveys such as \cite{NR,Pfl}.

\section{(Super)nilpotence in universal algebra}\label{sec:ua}

For a general background in universal algebra, we refer to the textbook \cite{Berg}. We will focus on the the abstract notions of nilpotence and supernilpotence.

By a \emph{polynomial operation} on an algebraic structure (shortly, \emph{algebra}) $A$ we mean a term operation of the algebra $A$ enhanced with constants for every element of $A$. If no confusion arises, we will just use the noun \emph{polynomial}.

We are mostly interested in algebras with a \emph{Mal'tsev term}, i.e., a term $m$ satisfying the identities $m(x,y,y)=m(y,y,x)=x$. Groups and loops always have a Mal'tsev term, $m(x,y,z)=xy^{-1}z$ and $m(x,y,z)=(x/y)z$, respectively. Algebras with a Mal'tsev term will be called shortly \emph{Mal'tsev algebras}.

\subsection{Nilpotence}

The abstract commutator theory for congruence modular varieties was developed by Freese and McKenzie \cite{FM}, building upon earlier works of Smith \cite{Sm} and Hagemann and Herrmann \cite{HH}. In particular, they defined the center of an algebra, and subsequently, they define the abstract notion of nilpotence.

\begin{definition}
The \emph{center} of an algebra $A$ is the largest congruence $\zeta_A$ such that 
for every polynomial operation $p$, all pairs $a\,\zeta_A\,b$ and all tuples $\tup u,\tup v$ of elements of $A$
\begin{align*}
p(a,\tup u) &= p(a,\tup v) \\
&\Downarrow \\
p(b,\tup u) &= p(b,\tup v) 
\end{align*}
An algebra $A$ is called \emph{$k$-nilpotent} if it possesses a central series of congruences of length $k$, i.e., a series of congruences $0_A=\alpha_0\leq\alpha_1\leq\ldots\leq\alpha_k=1_A$ such that $\alpha_{i+1}/\alpha_i\leq\zeta_{A/\alpha_i}$.
The \emph{class of nilpotence}, $\cl_{n}(A)$, is the smallest $k$ such that $A$ is $k$-nilpotent.

(Equivalently, we could have defined the commutator of two congruences and subsequently, the lower and upper central series.)
\end{definition}

It is not difficult to prove that the center of a group in the present sense is the congruence corresponding to the standard center. Consequently, the two concepts of nilpotence are equivalent for groups.

\subsection{Supernilpotence}

The theory of higher commutators was introduced by Bulatov \cite{Bu} and developed mainly by Aichinger and Mudrinski \cite{AM} for Mal'tsev varieties and by Moorhead 
\cite{Moo} for congruence modular varieties. The outcome of the theory is a new, stronger notion of nilpotence, called \emph{supernilpotence}. Under mild universal algebraic assumptions (existence of a Taylor term), $k$-supernilpotence implies $k$-nilpotence \cite{Moo}. 

\begin{definition}\label{d:supnilp}
An algebra $A$ is called \emph{$k$-supernilpotent} if for every polynomial operation $p$ and all pairs of tuples $\tup a_1\neq\tup b_1,\dots,\tup a_k\neq\tup b_k$ and $\tup u,\tup v$ of elements of $A$ 
\begin{align*}
p(\tup x_1,\ldots,\tup x_k,\tup u) &= p(\tup x_1,\ldots,\tup x_k,\tup v) \qquad \forall (\tup x_1,\ldots,\tup x_k)\in\{\tup a_1,\tup b_1\}\times\ldots\times\{\tup a_k,\tup b_k\}\smallsetminus\{(\tup b_1,\ldots,\tup b_k)\} \\
&\Downarrow \\
p(\tup b_1,\ldots,\tup b_k,\tup u) &= p(\tup b_1,\ldots,\tup b_k,\tup v). 
\end{align*}
The \emph{class of supernilpotence}, $\cl_{sn}(A)$, is the smallest $k$ such that $A$ is $k$-supernilpotent.
\end{definition}

Note that 1-nilpotence and 1-supernilpotence are exactly the same conditions. Such algebras are called \emph{abelian}.

A $k$-ary operation $p$ is called \emph{absorbing at $a_1,\dots,a_k$ into $e$} if $p(u_1,\dots,u_k)=e$ whenever there is $i$ such that $u_i=a_i$.
More generally, an operation $p$ is called \emph{$k$-batch-absorbing at $\tup a_1,\dots,\tup a_k$ into $e$} if $p(\tup u_1,\dots,\tup u_k)=e$ whenever there is $i$ such that $\tup u_i=\tup a_i$.

\begin{lemma}\label{l:abs}
Let $A$ be a $k$-supernilpotent algebra. Then, for every $l>k$, all $l$-batch-absorbing polynomial operations are constant.
In particular, all absorbing polynomial operations of arity $>k$ are constant.
\end{lemma}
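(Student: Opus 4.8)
The plan is to derive a contradiction by feeding a putatively non-constant $l$-batch-absorbing polynomial into the defining implication of $k$-supernilpotence. Suppose $p$ is $l$-batch-absorbing at $\tup a_1,\dots,\tup a_l$ into $e$ with $l>k$, and suppose towards a contradiction that $p$ is not constant, so there is a tuple of inputs $\tup c_1,\dots,\tup c_l$ (and a fixed ``parameter'' block, if $p$ has more arguments than the $l$ absorbing ones) with $p(\tup c_1,\dots,\tup c_l,\dots)\neq e$. Without loss of generality $\tup c_i\neq\tup a_i$ for every $i\le l$, since replacing any $\tup c_i$ by $\tup a_i$ would make the value $e$; in particular all the pairs $\tup a_i\neq\tup c_i$ are genuine pairs, as required by Definition~\ref{d:supnilp}.

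Next I would set up the supernilpotence instance. Since $l>k$, write $l=k+r$ with $r\ge 1$, and group the last $r$ absorbing arguments together with the parameters to form a new parameter block; that is, consider the polynomial $q(\tup x_1,\dots,\tup x_k;\tup w):=p(\tup x_1,\dots,\tup x_k,\tup w)$ where $\tup w$ ranges over tuples supplying the remaining coordinates. Apply $k$-supernilpotence to $q$ with the pairs $\tup a_1\neq\tup c_1,\dots,\tup a_k\neq\tup c_k$ and with the two parameter tuples $\tup u:=(\tup a_{k+1},\dots,\tup a_l,\dots)$ and $\tup v:=(\tup c_{k+1},\dots,\tup c_l,\dots)$ (the ``$\dots$'' being any fixed common values on the genuine parameters of $p$, identical in $\tup u$ and $\tup v$). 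For every choice in $\{\tup a_1,\tup c_1\}\times\cdots\times\{\tup a_k,\tup c_k\}$ other than $(\tup c_1,\dots,\tup c_k)$, at least one $\tup x_i$ equals $\tup a_i$, so $p$ absorbs and the value is $e$ regardless of whether the parameter block is $\tup u$ or $\tup v$ — hence the hypothesis $q(\tup x_1,\dots,\tup x_k;\tup u)=q(\tup x_1,\dots,\tup x_k;\tup v)$ holds for all these tuples. The conclusion of $k$-supernilpotence then forces $q(\tup c_1,\dots,\tup c_k;\tup u)=q(\tup c_1,\dots,\tup c_k;\tup v)$. But $q(\tup c_1,\dots,\tup c_k;\tup u)=p(\tup c_1,\dots,\tup c_k,\tup a_{k+1},\dots)=e$ by absorption (an $\tup a$ appears), whereas $q(\tup c_1,\dots,\tup c_k;\tup v)=p(\tup c_1,\dots,\tup c_l,\dots)\neq e$ by the choice of the $\tup c$'s — a contradiction.

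Therefore $p$ must be constant, and since $p$ is $l$-batch-absorbing into $e$ its constant value is $e$. The ``in particular'' clause is immediate: an absorbing operation at $a_1,\dots,a_l$ into $e$ is the special case of a batch-absorbing operation in which each block $\tup a_i$ is a single element, so it too is constant when $l>k$.

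The only point requiring a little care — and the main (minor) obstacle — is the bookkeeping of arguments: Definition~\ref{d:supnilp} allows $p$ to have extra ``$\tup u,\tup v$'' arguments beyond the $k$ that get the $\tup a/\tup b$ treatment, and one must legitimately absorb $r=l-k$ of the absorbing slots into that parameter block while keeping the two parameter tuples equal on the truly free coordinates and differing only in the absorbed slots (where they take values $\tup a_i$ resp. $\tup c_i$). Once the slots are partitioned this way, the absorption property does all the work in verifying the hypotheses, and the conclusion of supernilpotence collides with absorption to give the contradiction. I expect no serious difficulty beyond this indexing.
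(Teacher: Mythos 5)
Your proof is correct and is essentially the paper's argument: absorption makes the premise of the supernilpotence implication hold automatically, and its conclusion then forces the value $e$ at an arbitrary point. The only cosmetic differences are that you argue by contradiction and bundle all $l-k$ extra absorbing slots into the parameter block in one step, whereas the paper first treats $l=k+1$ directly (taking $\tup u=\tup a_{k+1}$, $\tup v=\tup b_{k+1}$) and then reduces general $l$ to that case by fixing the trailing arguments.
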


\begin{proof}
Let $p$ be a $(k+1)$-batch-absorbing polynomial at $\tup a_1,\dots,\tup a_{k+1}$ into $e$. Let $\tup b_1,\dots,\tup b_{k+1}$ be tuples of elements of $A$. We will prove that $p(\tup b_1,\dots,\tup b_{k+1})=e$.
Let $\tup u=\tup a_{k+1}$ and $\tup v=\tup b_{k+1}$. Indeed, $p$ satisfies the assumptions of the implication from Definition \ref{d:supnilp}, since $\tup x_i=\tup a_i$ for at least one $i$. The conclusion then reads $p(\tup b_1,\ldots,\tup b_k, \tup b_{k+1})=p(\tup b_1,\ldots, \tup b_k, \tup a_{k+1})=e$ by absorption.

Let $p$ be an $l$-batch-absorbing polynomial at $\tup a_1,\dots,\tup a_{l}$, with $l>k+1$. Let $\tup b_1,\ldots,\tup b_l$ be tuples of elements of $A$. To prove that $p(\tup b_1,\dots,\tup b_l)=e$, consider the polynomial $q(\tup x_1,\ldots,\tup x_{k+1})=p(\tup x_1,\ldots,\tup x_{k+1},\tup b_{k+2},\ldots,\tup b_l)$, observe that it is $(k+1)$-batch-absorbing at $\tup a_1,\dots,\tup a_{k+1}$, and thus constant onto $e$.
\end{proof}

It follows from the second part of the proof that if all absorbing polynomials of arity $k+1$ are constant, then this holds for every arity $>k$, and similarly for batch-absorption.

%We shall note that the original condition in \cite{Bu} was more complicated, the present simplified form (for Mal'tsev algebras) is due to \cite[Proposition 5.4]{AM}.
%There are several related definitions which are equivalent for Mal'tsev algebras. Perhaps the most natural condition uses absorbing polynomials.

For Mal'tsev algebras, the converse implication in Lemma~\ref{l:abs} also holds, although the proof is not so simple. Also, for Mal'tsev algebras, we can replace tuples by single elements in Definition \ref{d:supnilp}. Theorem \ref{t:supnilp} collects several equivalent definitions of supernilpotence in Mal'tsev algebras. Condition (4) provides an algorithm to check $k$-supernilpotence. Condition (4) implies that $k$-supernilpotent algebras in a variety $\mathcal V$ form a subvariety. 

To state condition (3), we need the following technical definitions.
A \emph{fork} in a relation $R\subseteq A^n$ is a pair $(\tup u,\tup v)\in R^2$ satisfying $u_n\neq v_n$ and $u_i=v_i$ for all $i<n$.
For a positive integer $k$, let $k(i)$ denote the $i$-th digit from the right of the binary expansion of $k$. For a pair $(a,b)$, denote $(a,b)_{(0)}=a$, $(a,b)_{(1)}=b$ and define
$c_i^n(a, b) = ((a,b)_{(k(i))}:k=0,\ldots,2^n-1)\in A^{2^n}$.

To state condition (4), we need a sequence of terms $q_n$ in $2^n-1$ variables (using a Mal'tsev term $m$ as a parameter) defined inductivily as follows: 
\[ q_2=m(y,x,z), \qquad q_{n+1}=m(x_{2^n},q_n(x_1,\ldots,x_{2^n-1}),q_n(x_{2^n+1},\ldots,x_{2^{n+1}-1})).\]

\begin{theorem}\cite{AM,AMO,Op}\label{t:supnilp}
Let $A$ be a Mal'tsev algebra. The following conditions are equivalent:
\begin{enumerate}
	\item $A$ is $k$-supernilpotent.
	\item[(1*)] For every $(k+1)$-ary polynomial operation $p$ and all $a_1\neq b_1$, \dots, $a_k\neq b_k$ and $u,v$ from $A$, 
\begin{align*}
p(x_1,\ldots,x_k, u) &= p(x_1,\ldots, x_k, v) \qquad \forall ( x_1,\ldots, x_k)\in\{ a_1, b_1\}\times\ldots\times\{ a_k, b_k\}\smallsetminus\{( b_1,\ldots, b_k)\} \\
&\Downarrow \\
p( b_1,\ldots, b_k, u) &= p( b_1,\ldots, b_k, v).
\end{align*}
	\item Every $l$-batch-absorbing polynomial operation, $l>k$, is constant.
	\item[(2*)] Every absorbing polynomial operation of arity $>k$ is constant.
	\item The subalgebra of $A^{2^{k+1}}$ generated by $\{ c_i^{k+1}(a,b):\,  a,b\in A,\, i=1,\ldots,k+1 \}$ contains no fork.
%	\item For every polynomial operation $p$ of arity $k+1$ and every $a_1,\ldots,a_{k+1},b_1,\ldots,b_{k+1}\in A$, 
%	\[ q_{k+1}(p(\tup x_1),\ldots,p(\tup x_{2^{k+1}-1}))=p(b_1,\ldots,b_{k+1}) \]
%	where $\tup x_1,\ldots,\tup x_{2^{k+1}-1}$ are all $(k+1)$-tuples containing $a_i$ or $b_i$ in the $i$-th coordinate, excluding the tuple $(b_1,\ldots,b_{k+1})$.
	\item For every term $t$ and all pairs of tuples $\tup a_1,\tup b_1,\ldots,\tup a_{k+1},\tup b_{k+1}$ of elements of $A$, 
	\[ q_{k+1}(t(\tup a_1,\ldots,\tup a_{k+1}),t(\tup a_1,\ldots,\tup a_k,\tup b_{k+1}),\ldots,t(\tup b_1,\ldots,\tup b_k,\tup a_{k+1}))=t(\tup b_1,\ldots,\tup b_{k+1}) \]
	(the parameters of $t$ on the left hand side are all combinations of $\tup a$'s and $\tup b$'s except the one on the right hand side).
\end{enumerate}
\end{theorem}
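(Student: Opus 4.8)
The plan is to not reprove Theorem \ref{t:supnilp} from scratch, since the equivalences are collected from \cite{AM,AMO,Op}, but rather to assemble them by citing the appropriate sources and indicating the few glue arguments that make the statement self-contained. I would organize the argument as a cycle (or a small web) of implications. The equivalence of (1) with (2) is essentially the content of Lemma \ref{l:abs} together with its converse for Mal'tsev algebras: Lemma \ref{l:abs} already gives that $k$-supernilpotence forces all $l$-batch-absorbing polynomials with $l>k$ to be constant, and the remark after Lemma \ref{l:abs} reduces the statement to arity $k+1$; the converse direction, that constancy of all $(k+1)$-ary (batch-)absorbing polynomials implies $k$-supernilpotence, is the genuinely Mal'tsev part — one builds, from a polynomial $p$ witnessing failure of the implication in Definition \ref{d:supnilp}, an absorbing polynomial of arity $k+1$ that is nonconstant, using the Mal'tsev term $m$ to "difference out" the values; this is the argument of Aichinger--Mudrinski \cite{AM}. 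The equivalences (1)$\Leftrightarrow$(1*) and (2)$\Leftrightarrow$(2*) (passing from tuples to single elements) are also a Mal'tsev phenomenon: given a tuple-valued witness, one coordinate must already witness failure after composing with suitable unary polynomials, again built from $m$; I would cite \cite{AM} for this and only sketch the one-coordinate reduction.

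Next I would treat (3), the fork-freeness of the subalgebra of $A^{2^{k+1}}$ generated by the $c_i^{k+1}(a,b)$. The intuition is that a fork in this relation is exactly a term $t$ together with tuples $\tup a_i,\tup b_i$ for which the identity in (4) fails: the generators $c_i^{k+1}(a,b)$ encode, in their $2^{k+1}$ coordinates indexed by subsets $S\subseteq\{1,\dots,k+1\}$, the choice "$b$ in coordinate $i$ iff $i\in S$", so applying a term $t$ coordinatewise to generators produces precisely the tuple of values $t(\dots)$ over all $\pm$-combinations, and a fork says the last coordinate (the all-$\tup b$ combination) is not determined by the others. Thus (3)$\Leftrightarrow$(4) is essentially a restatement once the indexing is unwound, and (4)$\Leftrightarrow$(1) is the higher-commutator characterization of supernilpotence via the term condition, for which I would cite Opršal \cite{Op} and Aichinger--Mudrinski--Opršal \cite{AMO}; the terms $q_n$ defined recursively from $m$ are exactly the iterated Mal'tsev "correction" terms that let you solve for the missing value $t(\tup b_1,\dots,\tup b_{k+1})$ from the other $2^{k+1}-1$ values whenever the relation is fork-free.

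Concretely, the skeleton I would write down is: (1)$\Leftrightarrow$(2) via Lemma \ref{l:abs} plus the Mal'tsev converse of \cite{AM}; (1)$\Leftrightarrow$(1*) and (2)$\Leftrightarrow$(2*) via the single-coordinate reduction of \cite{AM}; (1)$\Leftrightarrow$(3) as the definition of $(k+1)$-supernilpotence of the term condition $C(\underbrace{1_A,\dots,1_A}_{k+1};0_A)$ — i.e. the statement that the "cube term condition" holds — which is \cite[Section 7]{AM}; and (3)$\Leftrightarrow$(4) by unwinding the binary indexing of the coordinates of $c_i^{k+1}$ and checking that the recursively defined $q_{k+1}$ is the term produced by repeatedly applying the Mal'tsev identities to collapse a fork, as in \cite{AMO,Op}. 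I would state each implication as a short labeled paragraph with the relevant citation and a one-line indication of the glue.

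The main obstacle I expect is the converse direction of (1)$\Leftrightarrow$(2), namely fabricating a nonconstant absorbing polynomial of arity $k+1$ out of a bare failure of the supernilpotence implication: this requires the Mal'tsev term in an essential way and is the one place where a genuine (if standard) construction, rather than a bookkeeping argument, is needed. A secondary nuisance is getting the indexing conventions in (3) and the recursion for $q_n$ to line up exactly — in particular checking that $q_2=m(y,x,z)$ is the right base case and that the splitting $q_{n+1}=m(x_{2^n},q_n(\dots),q_n(\dots))$ matches the "top bit of the binary expansion" grouping used to define $c_i^n$ — which is purely mechanical but error-prone, so I would verify it on the $k=1$ case (the abelian case, where $q_2(t(\tup a,\tup a),t(\tup a,\tup b),t(\tup b,\tup a))=t(\tup b,\tup b)$ is the familiar parallelogram identity) before stating the general recursion.
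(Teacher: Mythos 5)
Your proposal matches the paper's treatment: the paper likewise does not reprove the theorem but attributes (1)$\Leftrightarrow$(1*)$\Leftrightarrow$(2*) to \cite{AM}, condition (3) to \cite{Op}, condition (4) to \cite{AMO}, and closes the loop for (2) via Lemma \ref{l:abs} (which gives (1)$\Rightarrow$(2)) together with the trivial implication (2)$\Rightarrow$(2*). Your additional sketches of the glue arguments (the Mal'tsev construction of a nonconstant absorbing polynomial, the binary indexing behind (3) and (4)) are consistent with the cited sources, so the approach is essentially the same.
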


Equivalence of conditions (1),(1*),(2*) is proved in \cite{AM}, equivalence of condition (2) follows from Lemma \ref{l:abs}, condition (3) appears in \cite{Op}, condition (4) appears in \cite{AMO}.

%An algebra $A$ is called \emph{$k$-supernilpotent} if it satisfies the conditions of Theorem \ref{t:supnilp}, and 

Finite supernilpotent Mal'tsev algebras admit prime decomposition. This result can be found in \cite[Section 7]{AM} although a major part of the proof is based on results of Kearnes \cite{Kea}.

\begin{theorem}\label{t:supnilp_fin}\cite{AM}
Let $A$ be a finite Mal'tsev algebra. Then $A$ is $k$-supernilpotent for some $k$ if and only if $A$ is a direct product of nilpotent algebras of prime power size.
\end{theorem}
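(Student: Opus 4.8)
The plan is to prove the two implications of the equivalence separately. In both directions the nontrivial input is Kearnes's structure theory of finite nilpotent (more generally, congruence modular) algebras \cite{Kea}, as assembled in \cite[Section~7]{AM}; the rest is organizational bookkeeping with the characterizations collected in Theorem~\ref{t:supnilp}.

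\emph{From a prime decomposition to supernilpotence.} Suppose $A\cong A_1\times\cdots\times A_n$ with each $A_i$ nilpotent of prime power order. Two facts suffice. First, a nilpotent Mal'tsev algebra of prime power order is supernilpotent of some finite class; this is a consequence of Kearnes's analysis of finite nilpotent algebras in congruence modular varieties, and is the Mal'tsev counterpart of the coincidence $\cl_{sn}=\cl_{cn}$ for groups. Second, as noted right after Theorem~\ref{t:supnilp}, for each fixed $k$ the $k$-supernilpotent algebras of a variety form a subvariety, in particular they are closed under direct products; so if $A_i$ is $k_i$-supernilpotent then $A$ is $\big(\max_i k_i\big)$-supernilpotent.

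\emph{From supernilpotence to a prime decomposition.} Assume $A$ is $k$-supernilpotent. Since $A$ is Mal'tsev it has a Taylor term, so $A$ is $k$-nilpotent \cite{Moo}. Decompose $A\cong B_1\times\cdots\times B_r$ into directly indecomposable factors (this exists because $A$ is finite); each $B_j$ is a homomorphic image of $A$, hence again nilpotent and $k$-supernilpotent. It then suffices to show that a directly indecomposable finite nilpotent Mal'tsev algebra $B$ with $\cl_{sn}(B)\le k$ has prime power order. I would prove the contrapositive: if $|B|$ is divisible by two distinct primes $p\ne q$, then, precisely because $B$ is directly \emph{indecomposable}, its $p$-component and $q$-component are intertwined (there is no pair of complementary congruences splitting them off), and Kearnes's structure theory lets one convert this intertwining into a non-constant absorbing polynomial operation on $B$ of arbitrarily large arity. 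By Lemma~\ref{l:abs} (equivalently, condition~(2*) of Theorem~\ref{t:supnilp}) such a polynomial cannot occur in a $k$-supernilpotent algebra, a contradiction. Hence every $B_j$ has prime power order, which gives the claimed decomposition.

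The main obstacle is this last construction: manufacturing, out of the failure of direct decomposability of a non-prime-power nilpotent Mal'tsev algebra, non-constant absorbing polynomials of unbounded arity. This is exactly where the higher-commutator machinery of Aichinger and Mudrinski must be combined with Kearnes's description of finite nilpotent algebras; everything else reduces to the equivalent characterizations of Theorem~\ref{t:supnilp} and the stability of $k$-supernilpotence under products and quotients. One could additionally try to pin down the class — a nilpotent Mal'tsev algebra of order $p^m$ ought to be $m$-supernilpotent — but for the statement as given any finite bound is enough, and a rough estimate obtained by decomposing such an algebra along a maximal central series would already do.
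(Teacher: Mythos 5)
First, a point of reference: the paper does not prove Theorem~\ref{t:supnilp_fin} at all --- it is quoted from \cite[Section 7]{AM}, with the substantial input credited to \cite{Kea} --- so there is no in-paper proof to compare your proposal against; it has to be judged on its own.

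Your outline correctly locates where the mathematical content lives, and the bookkeeping steps are all sound: $k$-supernilpotent algebras form a subvariety, hence are closed under finite products and quotients; a finite algebra splits into directly indecomposable factors; supernilpotence implies nilpotence via \cite{Moo}. But as a proof there is a genuine gap in both directions, at exactly the points you flag. In the ``if'' direction, the assertion that a nilpotent Mal'tsev algebra of prime power order is supernilpotent of some finite class \emph{is} the entire content of that implication (the passage to products being trivial), so citing it as ``a consequence of Kearnes's analysis'' reduces the claim to itself rather than establishing it. In the ``only if'' direction, the proposed construction --- converting the direct indecomposability of a non-prime-power nilpotent algebra into non-constant absorbing polynomials of unbounded arity --- is never carried out, and it is not how the cited results actually work. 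The actual bridge in \cite[Section 7]{AM} is the free spectrum: for a finite Mal'tsev algebra, $k$-supernilpotence is equivalent to the free spectrum being bounded by $2^{O(n^k)}$ (because the $n$-generated free algebra is controlled by the non-constant absorbing polynomials of arity at most $k$), while Kearnes \cite{Kea} shows that a finite nilpotent algebra in a congruence modular variety has free spectrum bounded by $2^{p(n)}$ for a polynomial $p$ if and only if it is a direct product of nilpotent algebras of prime power order. Both of your black-boxed steps are instances of this equivalence; without the free-spectrum argument (or an explicit substitute), the sketch does not close. If you want a self-contained write-up, prove the supernilpotence/free-spectrum equivalence by counting absorbing polynomials and then invoke Kearnes's theorem verbatim; note that your reduction to directly indecomposable factors then becomes unnecessary, since Kearnes's theorem already outputs the prime decomposition directly.
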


\section{(Super)nilpotence in loops}\label{sec:loops}

\subsection{Loops}

Let $(Q,\cdot,\ldiv,\rdiv,1)$ be a loop, i.e., $\cdot$ is a binary operation, 1 is its unit element, $a\ldiv b$ is the unique solution to the equation $a*x=b$ and dually for $/$. We denote $L_x(y)=xy$, $R_x(y)=yx$ the \emph{left} and \emph{right translations}. The \emph{multiplication group} of $Q$ is the permutation group generated by the translations, $\mlt{Q}=\langle L_x,R_x:\ x\in Q\rangle$. The stabilizer of the unit element is called the \emph{inner mapping group}, $\inn Q=\mlt Q_1$. The inner mapping group is generated by the mappings \[L_{x,y} = L_{xy}^{-1}L_xL_y,\quad     R_{x,y} = R_{yx}^{-1}R_xR_y,\quad     T_x =R_x^{-1}L_x.\]
A subloop of $Q$ invariant with respect to the action of $\inn Q$ is called \emph{normal}. Normal subloops are precisely the kernels of homomorphisms. If $N$ is a normal subloop of $Q$, then $|N|$ divides $|Q|$. If $M,N$ are normal subloops of $Q$ such that $M\cap N=1$, then $|MN|=|M|\cdot|N|$.

For a general background in loops, we refer to the textbook \cite{Pfl}. % We will focus on the three concepts of nilpotence.

\subsection{Nilpotence}

The theory of nilpotent loops has been developed since the very beginnings of loop theory, with definitions taken in direct analogy to group theory (for details, see the survey article \cite{NR}).

Fix any commutator term $[x,y]$, i.e., a term such that $[x,y]=1$ if and only if $xy=yx$. Fix any associator term $[x,y,z]$, i.e., a term such that $[x,y,z]=1$ if and only if $x(yz)=(xy)z$. The following definition is a straightforward generalization from group theory.

\begin{definition}
The \emph{center of a loop} $Q$ is defined by \[ Z(Q)=\{a\in Q:\ [a,x]=[a,x,y]=[x,a,y]=[x,y,a]=1 \text{ for all } x,y\in Q\}. \]
A loop $Q$ is called \emph{$k$-centrally-nilpotent} if there is a series of normal subloops $1=N_0\leq N_1\leq\ldots\leq N_k=Q$ such that $N_{i+1}/N_i\leq Z(Q/N_i)$.
The \emph{class of central nilpotence}, $\cl_{cn}(Q)$, is the smallest $k$ such that $A$ is $k$-centrally-nilpotent.
\end{definition}

Note that $a\in Z(Q)$ if and only if $f(a)=a$ for every $f\in\inn Q$.

We define the \emph{upper central series} of $Q$ as the series of normal subloops $Z_i(Q)$, $i=0,1,\dots$ such that $Z_0(Q)=1$ and $Z_{i+1}(Q)/Z_i(Q)=Z(Q/Z_i(Q))$. Clearly, if $Z_k(Q)=Q$, then $Q$ is $k$-centrally-nilpotent, as witnessed by the series of iterated centers. The converse implication is also true, and not difficult to prove.

A loop is centrally nilpotent if and only if it is nilpotent in the sense of Section \ref{sec:ua}, see \cite[Section 10]{SV1} for a proof and \cite{SV2} for a further discussion.

As noted earlier, loops of prime power order need not be nilpotent (since $|Z(Q)|$ divides $|Q|$, any non-associative loop of prime order fails to be nilpotent) and nilpotent loops generally do not admit a prime decomposition.

\begin{example}\label{ex:6}
The following multiplication table shows a loop $Q$ of order 6 such that $\cl_{cn}(Q)=2$, but it is directly indecomposable.
\begin{displaymath}
\begin{array}{r|rrrrrr}
    Q&1&2&3&4&5&6\\
    \hline
    1&1&2&3&4&5&6\\
    2&2&1&4&3&6&5\\
    3&3&4&5&6&1&2\\
    4&4&3&6&5&2&1\\
    5&5&6&2&1&3&4\\
    6&6&5&1&2&4&3
\end{array}
\end{displaymath}
The center is $Z(Q)=\{1,2\}$, the factor $Q/Z(Q)$ is a cyclic group of order 3, hence $\cl_{cn}(Q)=2$. It is the only proper normal subloop of $Q$, hence direct indecomposability. 
\end{example}

\subsection{Nilpotence of the multiplication group}

The \emph{multiplication group} $\mlt Q$ of a loop $Q$ is the subgroup of the symmetric group $S_Q$ generated by all left and right translations (i.e., the mappings $x\mapsto ax$ and $x\mapsto xa$, for every $a\in Q$).
The loop $Q$ from Example \ref{ex:6} is 2-centrally-nilpotent, however, $\mlt Q$ is a non-nilpotent group of order 24.

Bruck \cite{Br} proved that loops with a nilpotent multiplication group are centrally nilpotent, and Wright \cite{Wr} proved that finite loops with a nilpotent multiplication group admit a prime decomposition. We present both proofs in Section \ref{sec:proofs}. 

%In the latter proof, we will use the following classical result of Bruck.

%\begin{theorem}\cite[p. 282, Theorem 8C]{Br}\label{t:bruck_p-group}
%Let $Q$ be a loop such that $|Q|=p^k$, $p$ prime. Then $Q$ is nilpotent if and only if $\mlt Q$ is a $p$-group.
%\end{theorem}

\subsection{Supernilpotence}

Loops are Mal'tsev algebras, therefore, we can define supernilpotence by any of the equivalent conditions of Theorem \ref{t:supnilp}. In our opinion, the absorption conditions (2) and (2*) are the most natural ones. We will use them in our proof of Theorem \ref{thm:main}, and also in the subsequent paper \cite{SV-sn}.

We start with an observation that in loops, without loss of generality, we can consider only absorbing polynomials at $(1,\ldots,1)$ into 1. 

\begin{observation}
Let $Q$ be a loop. Then $Q$ is $k$-supernilpotent if and only if all polynomials of arity $>k$ absorbing at $(1,\ldots,1)$ into 1 are constant.
\end{observation}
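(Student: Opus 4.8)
The plan is to read this off directly from condition~(2*) of Theorem~\ref{t:supnilp}, which states that a Mal'tsev algebra (hence a loop) is $k$-supernilpotent if and only if every absorbing polynomial operation of arity $>k$ is constant. The forward implication is then immediate: if $Q$ is $k$-supernilpotent, then in particular every polynomial of arity $>k$ that absorbs at $(1,\dots,1)$ into $1$ is constant.

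For the converse, I would reduce an arbitrary absorbing polynomial to one absorbing at $(1,\dots,1)$ into $1$ by pre- and post-composing with translations. Concretely, let $p$ be an $l$-ary polynomial with $l>k$ absorbing at $(a_1,\dots,a_l)$ into $e$; note $p(a_1,\dots,a_l)=e$. Since the left translations $x\mapsto a_ix$ and the operation $x\mapsto e\ldiv x$ are polynomial operations of $Q$, the formula
\[ p'(x_1,\dots,x_l)=e\ldiv p(a_1x_1,\dots,a_lx_l) \]
defines a polynomial operation of arity $l>k$. If $x_i=1$ for some $i$, then the $i$-th argument of $p$ equals $a_i\cdot1=a_i$, so $p(a_1x_1,\dots,a_lx_l)=e$ and hence $p'(x_1,\dots,x_l)=e\ldiv e=1$; thus $p'$ absorbs at $(1,\dots,1)$ into $1$. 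By hypothesis $p'$ is constant, and since $p'(1,\dots,1)=1$, it is constantly $1$.

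It then remains to transfer this back to $p$. Since the map $(y_1,\dots,y_l)\mapsto(a_1y_1,\dots,a_ly_l)$ is a bijection of $Q^l$, every tuple of $Q^l$ has the form $(a_1y_1,\dots,a_ly_l)$, and for each such tuple $p(a_1y_1,\dots,a_ly_l)=e\cdot p'(y_1,\dots,y_l)=e\cdot1=e$. Hence $p$ is constant, and condition~(2*) yields that $Q$ is $k$-supernilpotent. I do not expect a genuine obstacle here: the only thing to observe is that translations and divisions are polynomial operations, so composing with them preserves the arity and converts an arbitrary absorbing point and absorbing value into $(1,\dots,1)$ and $1$ respectively; the same device would equally well normalize only the value or only the point if that were more convenient elsewhere.
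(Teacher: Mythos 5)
Your proof is correct and follows essentially the same route as the paper: the forward direction is the specialization of the general absorption criterion (Lemma~\ref{l:abs}, equivalently condition~(2*) of Theorem~\ref{t:supnilp}), and the converse normalizes an arbitrary absorbing polynomial to one absorbing at $(1,\dots,1)$ into $1$ by composing with translations and a division. The paper uses right translations, writing $q(x_1,\ldots,x_n)=p(x_1a_1,\ldots,x_na_n)/e$ and then $p(x_1,\ldots,x_n)=q(x_1/a_1,\ldots,x_n/a_n)e$, whereas you use left translations and surjectivity of $y\mapsto a_iy$; this is an immaterial difference.
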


\begin{proof}
$(\Rightarrow)$ follows from Lemma \ref{l:abs}.
$(\Leftarrow)$ Let $p(x_1,\ldots,x_n)$ be a polynomial absorbing at $\tup a$ into $e$. Let $q(x_1,\ldots,x_n)=p(x_1a_1,\ldots,x_na_n)/e$. Then $q$ is absorbing at $(1,\ldots,1)$ into 1, thus constant. 
Consequently, $p(x_1,\ldots,x_n)=q(x_1/a_1,\ldots,x_n/a_n)e$ is constant onto $e$.
\end{proof}

In the rest of the paper, all absorbing polynomials are implicitly understood to be absorbing $(1,\ldots,1)$ into 1. A similar statement applies to batch-absorption.

Note that any commutator or associator term is absorbing. Therefore, 2-supernilpotent loops are associative and 1-supernilpotent loops are also commutative. Since in groups the classes of nilpotence and supernilpotence coincide, we obtain that
\begin{itemize}
	\item a loop is 1-supernilpotent if and only if it is an abelian group,
 \item a loop is 2-supernilpotent if and only if it is a 2-nilpotent group.
\end{itemize}

For central nilpotence, we have $\cl_{cn}(Q/Z(Q))=\cl_{cn}(Q)-1$. For supernilpotence, the difference may be greater than 1. We will show that it is at least 1.

\begin{proposition}\label{p:Q/Z}
Let $Q$ be a supernilpotent loop. Then $\cl_{sn}(Q/Z(Q))<\cl_{sn}(Q)$.
\end{proposition}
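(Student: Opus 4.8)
The plan is to prove the contrapositive-style statement directly: assuming $Q$ is $k$-supernilpotent, I want to show that $Q/Z(Q)$ is $(k-1)$-supernilpotent, using characterization (2*) of Theorem~\ref{t:supnilp} (all absorbing polynomials of arity $>k-1$, i.e., of arity $\geq k$, are constant). So let $\bar p(\bar x_1,\dots,\bar x_k)$ be a polynomial operation of $Q/Z(Q)$ of arity $k$, absorbing at $(1,\dots,1)$ into $1$ (recall the convention fixed just before the Observation). I must show $\bar p$ is constant, i.e., $\bar p\equiv 1$ on $Q/Z(Q)$.

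First I would lift $\bar p$ to a polynomial $p(x_1,\dots,x_k)$ on $Q$: since $Q/Z(Q)$ is a quotient loop, every polynomial operation of the quotient is induced by a polynomial operation of $Q$ (replace the constants of $\bar p$ by arbitrary preimages in $Q$, keeping the same term structure). The issue is that $p$ need not be absorbing into $1$; we only know $p(u_1,\dots,u_k)\in Z(Q)$ whenever some $u_i=1$. To turn this into genuine absorption, I would form an auxiliary polynomial of arity $k+1$ by plugging the value of $p$ into a ``difference'' constructed via the Mal'tsev term (or, in loop notation, via division). Concretely, consider
\[
r(x_0,x_1,\dots,x_k) \;=\; \bigl(p(x_1,\dots,x_k)\,/\,x_0\bigr)\cdot\, q(x_1,\dots,x_k)
\]
for a suitable second polynomial $q$; the aim is to cook up something of arity $k+1$ that is absorbing at $(1,\dots,1)$ into $1$ and whose value, when $x_0$ ranges over $Z(Q)$, detects whether $p$ itself takes values outside $Z(Q)$. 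A cleaner route: since $Z(Q)$ is the center in the universal-algebraic sense, the term-condition for the center (Definition of $\zeta_A$) gives us, for $a\,\zeta_Q\,b$, a polynomial identity we can exploit; combined with the Mal'tsev term one builds, out of $p$, an absorbing polynomial $s$ of arity $k+1$ on $Q$ such that $s$ being constant forces $p(\cdot)\in$ a single coset and in fact $\bar p$ constant. By Lemma~\ref{l:abs} (or condition (2*)), every arity-$(k+1)$ absorbing polynomial on the $k$-supernilpotent loop $Q$ is constant, so $s$ is constant, and unwinding this yields $\bar p\equiv 1$.

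The main obstacle I anticipate is getting the bookkeeping of the auxiliary polynomial exactly right: one needs an arity-$(k+1)$ polynomial on $Q$ that (i) genuinely absorbs into $1$ at the all-$1$'s tuple, and (ii) whose constancy is equivalent to, or at least implies, $\bar p$ being constant on $Q/Z(Q)$. Point (i) requires that when any of the $k$ ``main'' coordinates is $1$ the output collapses to $1$ in $Q$ — which is where the factor that kills the $Z(Q)$-ambiguity must be engineered — and point (ii) requires that the extra coordinate $x_0$, ranging over all of $Q$ (not just $Z(Q)$), still lets us recover the quotient statement. I would handle (i) by exploiting that $p(1,\dots,1)\in Z(Q)$ so that dividing by that constant is harmless modulo $Z(Q)$, and handle (ii) by choosing the extra variable to range over preimages of $1$ and using that $Z(Q)$ is precisely the congruence class structure involved; the term-condition defining $\zeta_Q$ is exactly the tool that converts ``$x_0\in Z(Q)$'' into a usable polynomial identity. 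Once the auxiliary polynomial is constructed, the rest is a one-line application of the absorption characterization of $k$-supernilpotence together with the Observation, giving $\cl_{sn}(Q/Z(Q))\leq k-1<k=\cl_{sn}(Q)$.
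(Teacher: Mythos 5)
Your setup matches the paper's: lift $\bar p$ to a polynomial $q$ on $Q$ satisfying $q(\tup u)\in Z(Q)$ whenever some $u_i=1$, then manufacture absorbing polynomials of arity $>k$ on $Q$ whose constancy (guaranteed by $k$-supernilpotence) forces $q$ to take \emph{all} its values in $Z(Q)$, hence $\bar p\equiv 1$. But the decisive step --- the actual construction of those absorbing polynomials --- is missing, and the candidates you sketch do not work. Dividing by $x_0$ or by the constant $p(1,\dots,1)$ only normalizes the value at one particular tuple; it cannot produce a polynomial that equals $1$ whenever \emph{any single} coordinate is $1$, because on those slices $q$ ranges over $Z(Q)$ rather than being identically $1$, and no such unary post-composition collapses all of $Z(Q)$ to $1$. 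Appealing to the abstract term condition defining $\zeta_Q$ likewise does not by itself hand you an absorbing polynomial; you explicitly flag this bookkeeping as the obstacle and then do not resolve it.

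The missing idea is the \emph{elementwise} characterization of the loop center: $a\in Z(Q)$ iff $[a,x]=[a,x,y]=[x,a,y]=[x,y,a]=1$ for all $x,y$, where $[\cdot,\cdot]$ and $[\cdot,\cdot,\cdot]$ are the (absorbing) commutator and associator terms. The paper's proof sets $r(y,x_1,\dots,x_k)=[y,q(x_1,\dots,x_k)]$ together with the three associator variants $r_1,r_2,r_3$ of arity $k+2$. Each is absorbing at the all-ones tuple into $1$: if the extra variable(s) equal $1$, this is absorption of the commutator/associator term itself; if some $x_i=1$, then $q(\tup x)\in Z(Q)$ and the commutator/associator vanishes by definition of the center. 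All four have arity $>k$, so $k$-supernilpotence makes them constantly $1$, which says precisely that $q(\tup a)\in Z(Q)$ for every $\tup a$, i.e.\ $\bar p\equiv 1$ on $Q/Z(Q)$. Without this (or some equivalent) construction, your argument remains a plan rather than a proof.
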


\begin{proof}
Let $p$ be a $k$-ary absorbing polynomial on $Q/Z(Q)$. Let $q$ result from $p$ by replacing every constant $aZ(Q)$ by $a$. Then $q$ is a polynomial on $Q$ such that
$q(a_1,\ldots,a_k)\in Z(Q)$ whenever $a_i\in Z(Q)$ for some $i$. Let
\begin{align*}
r(y,x_1,\ldots,x_k)&=[y,q(x_1,\ldots,x_k)],\\
 r_1(y,z,x_1,\ldots,x_k)&=[y,z,q(x_1,\ldots,x_k)],\\ r_2(y,z,x_1,\ldots,x_k)&=[y,q(x_1,\ldots,x_k),z],\\ r_3(y,z,x_1,\ldots,x_k)&=[q(x_1,\ldots,x_k),y,z].
\end{align*}
Then $r,r_1,r_2,r_3$ are absorbing polynomials on $Q$ of arity $>k$, hence constant on $Q$. Consequently, $q(a_1,\ldots,a_k)\in Z(Q)$ for all $a_1,\ldots,a_k\in Q$, and thus
$p(a_1Z(Q),\ldots,a_kZ(Q))=1Z(Q)$ in $Q/Z(Q)$.
\end{proof}

(This proposition actually holds for every Mal'tsev algebra $A$: if $A$ is $k$-supernilpotent, then, using inequality (HC8) of \cite{AM}, $[[1_A,\dots,1_A]_k,1_A]\leq [1_A,\dots,1_A]_{k+1}=0_A$, 
which implies that $[1,\dots,1]_k$ is contained in the center $\zeta_A$ of $A$, and so $A/\zeta_A$ is $(k-1)$-supernilpotent. However note that the loop theoretic proof is rather elementary, not relying on the inequality (HC8).)

%\subsection{Calculating the class of supernilpotence}\label{sec:calculations}

Condition (3) of Theorem \ref{t:supnilp} suggests an algorithmic procedure for checking supernilpotence. 
%A computationally feasible approach is presented by Opr\v sal \cite{Op}. 
%He proves that an algebra is $k$-supernilpotent if and only if certain subpower of $A^{2^k}$, given by an explicit set of generators, contains no fork. 
%Subalgebra generation is efficient with respect to the subalgebra size, nevertheless, in this case, the subalgebra is very large. 
Subalgebra generation is efficient with respect to the subalgebra size, and searching for a fork is a simple task.
Nevertheless, the subalgebra of $A^{2^k}$ described in the condition may be (and often is) very large, even for small $k$ and $|A|$. 
Storing the elements of the power in a tree data structure allows to identify forks instantly. With some optimization, we were able to find forks for certain loops of size $8$, thus proving that they are not 3-supernilpotent. 

\begin{example}\label{ex:8}
There are 134 non-associative nilpotent loops of order 8, listed in the {\tt loops} package for the computer system {\tt GAP} \cite{loopspackage}. All of them have $\cl_{cn}=2$ and $\cl_{m}\in\{3,4\}$.

There are 62 nilpotent loops of order 8 with $\cl_m=3$. For 34 of them, we calculated that $\cl_{sn}>3$. We expect that the remaining 28 loops have $\cl_{sn}=3$, but we have no tool to prove so.
\end{example}

A detailed report on the computational experiments, including a description of the algorithm, can be found in \cite{Z-diplomka}.

\section{Proof of Theorem \ref{thm:main}}\label{sec:proofs}

\subsection{Proof of $\cl_m(Q)\leq\cl_{sn}(Q)$}

First, let us formally introduce a correspondence between words in the multiplication group and terms of the loop.

Fix a set of variables $X$. 
An \emph{m-word} of length $l$ is a formal expression of the form $(U^{(1)}_{x_1})^{k_1}\ldots(U^{(l)}_{x_l})^{k_l}$ where $U^{(i)}\in\{L,R\}$, $x_1,\ldots,x_l\in X$ and $k_1,\ldots,k_l\in\{\pm1\}$.
Expressions using group commutators will be also understood as m-words, for example, the expression $[L_x,R_y^{-1}]$ shall be understood as the m-word $L_x^{-1}R_yL_xR_y^{-1}$.

An m-word $W$ containing variables $x_1,\ldots,x_n\in X$ can be converted naturally into a loop term $t_W(x_1,\ldots,x_n,z)$. For example, for $W=L_yR_x^{-1}$ we have $t_W(x,y,z)=y(z/x)$, interpreting the expression $L_yR_x^{-1}(z)$ as a term.
Formally, we define $t_W$ recursively: 
let $t_{l+1}=z$; for $i=l,\ldots,1$ we define $t_{i}=x_{i}t_{i+1}$ if $U^{(i)}=L$ and $k_i=1$, $t_{i}=x_{i}\ldiv t_{i+1}$ if $U^{(i)}=L$ and $k_i=-1$, $t_{i}=t_{i+1}x_{i}$ if $U^{(i)}=R$ and $k_i=1$, $t_{i}=t_{i+1}/x_{i}$ if $U^{(i)}=R$ and $k_i=-1$; finally, let $t_W=t_1$.

Observe that for every $f\in\mlt Q$ there is an m-word $W$ and $a_1,\ldots,a_n\in Q$ such that $f(q)=t_W(a_1,\ldots,a_n,q)$ for every $q\in Q$. 

%Let $Q$ be a loop and $a_1,\ldots,a_n\in Q$. An m-word $W$ in variables $x_1,\ldots,x_n$ can be converted naturally into a mapping $W_{a_1,\ldots,a_n}\in\mlt Q$, replacing $x_i$ for $a_i$ and interpreting the word as a product of generators of $\mlt Q$.
%Moreover, every pair of mappings $h_1, h_2\in\mlt Q$ can be obtained from the same m-word: if $h_1=W_{\tup a}$ and $h_2=V_{\tup b}$, then $h_1=(WV)_{\tup a,1,\ldots,1}$ and $h_2=(WV)_{1,\ldots,1,\tup b}$.

%By a \emph{c-polynomial} of depth $n$ we mean any polynomial of the form \[ t=[W_1,[\ldots,[W_{n-1},W_n]]](a)/a,\] where $W_1,\ldots,W_n$ are m-words, $a\in Q$, and each variable occurs in $t$ only once. (The commutator of m-words is interpreted as another m-word, which is naturally converted into a loop polynomial.)

\begin{proof}
Assume that $Q$ is $k$-supernilpotent. To prove that $\mlt Q$ is $k$-nilpotent, we will show that
\[ [f_1,[\ldots,[f_{k},f_{k+1}]]]=id \]
for all $f_1,\ldots,f_{k+1}\in\mlt Q$.
This is equivalent to proving that $Q$ satisfies every identity 
\[ t_W(\tup x_1,\ldots,\tup x_{k+1},z)=z \] 
where 
\[ W=[W_1,[\ldots,[W_k,W_{k+1}]]] \]
and $W_1(\tup x_1), \ldots, W_{k+1}(\tup x_{k+1})$ are arbitrary m-words. 
Note that $t_W(\tup u_1,\ldots,\tup u_{k+1},q)=q$ whenever there is $i$ such that $\tup u_i=(1,\ldots,1)$. Therefore, 
the polynomial \[ p(\tup x_1,\ldots,\tup x_{k+1})=t_W(\tup x_1,\ldots,\tup x_{k+1},q)/q \] is $(k+1)$-batch-absorbing, thus constant onto 1, and
$t_W(\tup u_1,\ldots,\tup u_{k+1},q)=q$ for every $\tup u_1,\dots,\tup u_{k+1}$ and $q$.
%
%To verify the identity for tuples $\tup u_1,\ldots,\tup u_{k+1}$ of elements of $Q$ and an element $q\in Q$
%consider the implication from Definition \ref{d:supnilp} for the polynomial 
%$p=t_W(\tup x_1,\ldots,\tup x_{k+1},q)$, pairs of tuples $\tup a_i=(1,...,1)$, $\tup b_i=\tup u_i$, $i=1,\dots,k$ and the pair $\tup u=(1,...,1)$, $\tup v=\tup u_{k+1}$.
%All equalities from the assumption contain at least tuple of 1's, hence both sides are equal to $q$. From the conclusion, we read that 
%\[ q=p(\tup u_1,\ldots,\tup u_k,(1,...,1)) = p(\tup u_1,\ldots,\tup u_k,\tup u_{k+1})=t_W(\tup u_1,\ldots,\tup u_{k+1},q). \]
\end{proof}

\subsection{Proof of $\cl_{cn}(Q)\leq\cl_{m}(Q)$}

Let $Q$ be a loop. We define a series $\mathcal N_0 \leq \mathcal N_1 \leq \ldots$ of subgroups of $\mlt Q$ inductively: let $\mathcal N_0=\inn Q$ and let $\mathcal N_{i+1}$ be the normalizer of $\mathcal N_i$ in $\mlt Q$. 

\begin{lemma}\label{l:ineq2}
Let $Q$ be a loop and $\mathcal Z_0\leq\mathcal Z_1\leq\ldots$ the upper central series of $\mlt Q$. Then $\mathcal N_i \geq \mathcal Z_i$ for every $i$. 
\end{lemma}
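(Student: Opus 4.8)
\textbf{Proof plan for Lemma \ref{l:ineq2}.}

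The plan is to prove $\mathcal N_i\geq\mathcal Z_i$ by induction on $i$. The base case $i=0$ requires $\mathcal N_0=\inn Q\geq\mathcal Z_0=1$, which is trivial. For the inductive step, I would assume $\mathcal N_i\geq\mathcal Z_i$ and show $\mathcal N_{i+1}\geq\mathcal Z_{i+1}$. Recall that $\mathcal Z_{i+1}$ is the preimage in $\mlt Q$ of the center of $\mlt Q/\mathcal Z_i$; equivalently, $\mathcal Z_{i+1}=\{f\in\mlt Q:\ [f,g]\in\mathcal Z_i\text{ for all }g\in\mlt Q\}$. Since $\mathcal N_{i+1}$ is the normalizer of $\mathcal N_i$ in $\mlt Q$, it suffices to show that every $f\in\mathcal Z_{i+1}$ normalizes $\mathcal N_i=\inn Q$; that is, $f\inn Q f^{-1}\subseteq\inn Q$.

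The key step is the following observation connecting the upper central series of $\mlt Q$ with the inner mapping group. Let $f\in\mathcal Z_{i+1}$ and let $h\in\inn Q$. I want to show $fhf^{-1}\in\inn Q$, i.e., $fhf^{-1}$ fixes the unit element $1$ of $Q$. Write $fhf^{-1}=h\cdot[h^{-1},f^{-1}]$ (or a similar commutator identity arranging things so that the commutator lies in $\mathcal Z_i$). By the inductive hypothesis $[h^{-1},f^{-1}]\in\mathcal Z_i\leq\mathcal N_i=\inn Q$, hence $[h^{-1},f^{-1}]$ fixes $1$. Applying $fhf^{-1}=h\cdot[h^{-1},f^{-1}]$ to $1$: since both $h\in\inn Q$ and $[h^{-1},f^{-1}]\in\inn Q$ fix $1$, their product fixes $1$, so $fhf^{-1}\in\inn Q$. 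This shows $f$ normalizes $\inn Q$, i.e., $f\in\mathcal N_{i+1}$, completing the induction.

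The point where I expect to need care is verifying that the relevant commutator genuinely lands in $\mathcal Z_i$ rather than merely in $\mathcal Z_{i+1}$: the defining property $[f,g]\in\mathcal Z_i$ for $f\in\mathcal Z_{i+1}$ must be invoked with the correct choice of $g\in\mlt Q$ (here $g=h^{-1}$ or $g=h$, an element of $\inn Q\leq\mlt Q$), and one must pick the commutator identity for $fhf^{-1}$ that exposes such a commutator as a factor. Once the bookkeeping with the commutator identity $fhf^{-1}=h[h^{-1},f^{-1}]$ (valid in any group) is set up correctly, the argument is a short computation using only that elements of $\inn Q$ fix $1$ and that $\mathcal Z_i\leq\mathcal N_i$ by induction. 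No deeper loop-theoretic input is needed for this lemma; it is a purely group-theoretic statement about $\mlt Q$ and its subgroup $\inn Q$, and the loop only enters through the identification $\mathcal N_0=\inn Q$ together with the fact that $\inn Q$ is exactly the stabilizer of $1$.
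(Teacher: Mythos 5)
Your overall strategy --- induction on $i$, using that for $f\in\mathcal Z_{i+1}$ the commutator with any element of $\mlt Q$ lands in $\mathcal Z_i$, hence in $\mathcal N_i$ by the induction hypothesis --- is exactly the paper's argument, and your remark that the lemma is purely group-theoretic is correct. But there is a genuine error in the execution: throughout the inductive step you identify $\mathcal N_i$ with $\inn Q$ and test membership in $\mathcal N_i$ by checking whether a map fixes the unit element. This is valid only for $i=0$. For $i\geq 1$ the group $\mathcal N_i$ is the iterated normalizer of $\inn Q$, which is in general strictly larger than $\inn Q$ and contains elements that move $1$ (by Lemma \ref{p:normalizer} it equals $\{R_af:\ a\in Z_i,\ f\in\inn Q\}$). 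As written, your step shows only that every $f\in\mathcal Z_{i+1}$ normalizes $\inn Q=\mathcal N_0$, i.e., that $\mathcal Z_{i+1}\leq\mathcal N_1$; it does not yield $\mathcal Z_{i+1}\leq\mathcal N_{i+1}$.

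The repair is short and lands you on the paper's proof. Take an arbitrary $g\in\mathcal N_i$, not just $g\in\inn Q$. Since $f\mathcal Z_i$ is central in $\mlt Q/\mathcal Z_i$, you have $fgf^{-1}=gz$ for some $z\in\mathcal Z_i$, and $z\in\mathcal N_i$ by the induction hypothesis; therefore $fgf^{-1}\in\mathcal N_i$ simply because $\mathcal N_i$ is a subgroup containing both $g$ and $z$. No appeal to fixing $1$ is needed (or available). This shows $f$ normalizes $\mathcal N_i$, i.e., $f\in\mathcal N_{i+1}$, completing the induction. Your commutator identity $fhf^{-1}=h[h^{-1},f^{-1}]$ is fine as bookkeeping; the only thing to change is the criterion by which the product is certified to lie in $\mathcal N_i$.
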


\begin{proof}
We proceed by induction on $i$. For $i=0$, we have $\inn Q=\mathcal N_0\geq\mathcal Z_0=1$. 

Suppose that $\mathcal N_i \geq \mathcal Z_i$ and let $f \in \mathcal Z_{i+1}$. By definition, $f\mathcal Z_i \in Z(\mlt Q/\mathcal Z_i)$. 
For every $g \in \mathcal N_{i}$, $fgf^{-1}\mathcal Z_i=g \mathcal Z_i$, and thus, by the induction assumption,
$fgf^{-1}\mathcal N_i=g \mathcal N_i =\mathcal N_i$, that is, $fgf^{-1} \in \mathcal N_i$. 
It follows that $f$ is in the normalizer of $\mathcal N_i$, which is $\mathcal N_{i+1}$.
\end{proof}

Observe that, for every $t\in\mlt Q$, we have $t=R_{t(1)}h$ for some $h\in\inn Q$.

\begin{lemma} %\cite[Theorem 8B]{Br}
\label{p:normalizer}
Let $Q$ be a loop and $Z_0\leq Z_1\leq\ldots$ its upper central series. 
Then, for every $i$,
\[\mathcal N_i=\{R_af :\ a \in Z_i, f \in \inn Q \}.\]
In particular, $Q$ is $k$-nilpotent if and only if $\mathcal N_k=\mlt Q$. % and $\mathcal N_{k-1}\neq \mlt Q$.
\end{lemma}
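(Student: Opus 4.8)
The plan is to prove the displayed identity $\mathcal N_i=\{R_af:\ a\in Z_i,\ f\in\inn Q\}$ by induction on $i$, with the case $i=1$ carrying essentially all the content. Throughout, for a normal subloop $N$ of $Q$ write $M(N)=\{R_af:\ a\in N,\ f\in\inn Q\}$; the observation $t=R_{t(1)}h$ (with $h\in\inn Q$) recorded just before the statement shows that $M(N)=\{t\in\mlt Q:\ t(1)\in N\}$. I would also use the surjective homomorphism $\pi_N\colon\mlt Q\to\mlt{Q/N}$ induced by the quotient map, for which $\pi_N(t)(xN)=t(x)N$: since $\inn{Q/N}$ is the stabilizer of the unit coset $N$, one has $\pi_N^{-1}(\inn{Q/N})=\{t:\ t(1)\in N\}=M(N)$, so $M(N)$ is a subgroup of $\mlt Q$ containing $\inn Q$, with $\ker{\pi_N}\subseteq M(N)$ and $\pi_N(M(N))=\inn{Q/N}$. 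Since each $Z_i$ is a normal subloop, the claim is $\mathcal N_i=M(Z_i)$, and the base case $i=0$ is immediate because $Z_0=1$ and $R_1=\mathrm{id}$.

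The core case $i=1$ asserts $N_{\mlt Q}(\inn Q)=M(Z(Q))$. For $\subseteq$: given $t$ in the normalizer, write $t=R_ah$ with $a=t(1)$ and $h\in\inn Q$; then $R_a=th^{-1}$ also normalizes $\inn Q$ (the normalizer is a group containing $\inn Q$), so $R_a^{-1}gR_a\in\inn Q$ for every $g\in\inn Q$, and evaluating at $1$ gives $g(a)/a=1$, that is $g(a)=a$ for all $g\in\inn Q$; hence $a\in Z(Q)$ by the characterization $Z(Q)=\{a:\ g(a)=a\text{ for all }g\in\inn Q\}$, and so $t\in M(Z(Q))$. For $\supseteq$: $\inn Q$ normalizes itself, and if $a\in Z(Q)$ then also $1/a\in Z(Q)$ (the center is a subloop), so $(R_agR_a^{-1})(1)=g(1/a)\cdot a=1$ and $(R_a^{-1}gR_a)(1)=g(a)/a=1$ for all $g\in\inn Q$, whence $R_a\in N_{\mlt Q}(\inn Q)$; as this normalizer is a group containing $\inn Q$ and every such $R_a$, it contains $M(Z(Q))$.

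For the inductive step, suppose $\mathcal N_i=M(Z_i)$. The correspondence theorem, applied to the normal subgroup $\ker{\pi_{Z_i}}$ inside $M(Z_i)=\pi_{Z_i}^{-1}(\inn{Q/Z_i})$, gives
\[ \mathcal N_{i+1}=N_{\mlt Q}(M(Z_i))=\pi_{Z_i}^{-1}(N_{\mlt{Q/Z_i}}(\inn{Q/Z_i})). \]
Running the argument of the previous paragraph with $Q/Z_i$ in place of $Q$ rewrites the inner normalizer as $\{s:\ s(1)\in Z(Q/Z_i)\}$, and pulling this back along $\pi_{Z_i}$ --- using $\pi_{Z_i}(t)(Z_i)=t(1)Z_i$ and $Z_{i+1}/Z_i=Z(Q/Z_i)$ --- yields $\mathcal N_{i+1}=\{t:\ t(1)\in Z_{i+1}\}=M(Z_{i+1})$. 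The ``in particular'' clause then drops out: $\mathcal N_k=\mlt Q$ iff $M(Z_k)=\mlt Q$ iff $Z_k=Q$ (from $\mlt Q=M(Z_k)$ one reads off $b=R_b(1)\in Z_k$ for every $b\in Q$), which is equivalent to $Q$ being $k$-centrally-nilpotent.

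I expect the only genuine difficulty to be the case $i=1$, and within it the implication ``if $R_a$ normalizes $\inn Q$ then $a\in Z(Q)$'': the trick is to feed the conjugate $R_a^{-1}gR_a$ the point $1$ and read off that $a$ is fixed by every inner mapping. The remaining ingredients --- the identity $M(N)=\{t:t(1)\in N\}$, the elementary properties of $\pi_N$, the correspondence-theorem reduction, and the final translation into central nilpotence --- are all routine.
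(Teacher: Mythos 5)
Your proof is correct, but it is organized quite differently from the paper's. The paper proves both inclusions directly at every level of the induction: the step $i\to i+1$ is carried out inside $\mlt Q$ by explicit computations with the normalizing equation $(R_af)t=t(R_bg)$ applied to the unit element, including the construction of an auxiliary inner mapping $h_2=R_c^{-1}R_x^{-1}R_{cx}h_1$ in the $(\supseteq)$ direction. You instead isolate the single statement $N_{\mlt Q}(\inn Q)=\{t:\ t(1)\in Z(Q)\}$ --- which is essentially the paper's computation specialized to the first step, where it reduces to the observation that $R_a^{-1}gR_a$ and $R_agR_a^{-1}$ fix $1$ exactly when $g$ fixes $a$ and $1/a$ --- and then obtain every higher level by pulling back along the surjection $\pi_{Z_i}\colon\mlt Q\to\mlt{Q/Z_i}$ via the correspondence theorem for normalizers of subgroups containing the kernel. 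This modularization buys a cleaner inductive step and avoids the paper's bookkeeping with cosets of $Z_i$, at the price of importing the homomorphism $\pi_N$ and its elementary properties (well-definedness, surjectivity, and $\pi_N^{-1}(\inn{Q/N})=\{t:\ t(1)\in N\}$), which the paper only sets up later as item (G4) of the Galois correspondence; these are indeed routine, and your use of $Z_{i+1}/Z_i=Z(Q/Z_i)$ to identify the pullback is precisely the definition of the upper central series. Both arguments ultimately rest on the same two facts: the decomposition $t=R_{t(1)}h$ with $h\in\inn Q$, and the characterization of $Z(Q)$ as the fixed-point set of $\inn Q$.
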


%\red{komutator je $a^{-1}b^{-1}ab$}

\begin{proof}
We prove the claim by induction on $i$. The case $i=0$ is obvious, so suppose that the claim holds for some $i$ and we prove it for $i+1$.

$(\subseteq)$ Let $t=R_{t(1)}h \in N_{i+1}$. It is sufficient to show that $t(1)\in Z_{i+1}$. That is, $t(1)Z_i=Z(Q/Z_i)$, which means $f(t(1))Z_i=t(1)Z_i$ for every $f\in\inn Q$.

Let $f\in\inn Q$. Since $t$ is in the normalizer of $\mathcal N_i$, for every $a\in Z_i$ we have $(R_af)t=t(R_bg)$ for some $b \in Z_i$, $g \in \inn Q$. Applying the equality on the unit element, we obtain
\[ f(t(1))a = t(g(1)b) = t(b) = h(b)t(1), \]
and thus $f(t(1))Z_i=Z_it(1)=t(1)Z_i$ which we wanted to prove.

$(\supseteq)$ Let $t=R_xh$, $h\in \inn Q$, $x \in Z_{i+1}$, we prove that it normalizes every element of $\mathcal N_i$. By the induction assumption, consider the element $R_af$ where $a \in Z_i$, $f \in \inn Q$. Then $R_aft=R_yh_1$, where $h_1 \in \inn Q$ and $y=R_aft(1)=f(x)\cdot a$. Since $x\in Z_{i+1}$, we have $f(x)Z_i=xZ_i=Z_ix$. Therefore, $y=f(x)\cdot a = cx$ for some $c \in Z_i$.
Now, let $h_2=R_c^{-1}R_x^{-1}R_{cx}h_1 \in \inn Q$ and calculate
\[R_aft=R_{cx}h_1=R_xR_ch_2=(R_xh)(h^{-1}R_ch_2)=(R_xh)(R_bg)=t(R_bg),\]
for $b=h^{-1}(c) \in Z_i$ and some $g \in \inn Q$. It follows that $t\in \mathcal N_{i+1}$.

The last claim follows from the fact that $\{R_a:a\in Q\}$ is a transversal of $\inn Q$ in $\mlt Q$.
\end{proof}

The inequality $\cl_{cn}(Q)\leq\cl_{m}(Q)$ immediately follows, since, by Lemma \ref{l:ineq2}, the series $\mathcal N_i$ terminates not later than the series $\mathcal Z_i$.

\subsection{Proof of $\cl_{m}(Q)<\infty\Rightarrow\cl_{sn}(Q)<\infty$}

For every loop $Q$, there is a (covariant) Galois correspondence (cf. \cite[Section 2.5]{Berg}) between normal subloops of $Q$ and normal subgroups of $\mlt Q$, given by
\begin{align*}
\mathrm{NSub}(Q) &\leftrightarrow \mathrm{NSub}(\mlt Q)\\
N &\rightarrowtail N^*=\{f\in\mlt Q: \ f(x)/x\in N\text{ for all }x\in Q\} \\
G(1)=\{g(1):g\in G\} &\leftarrowtail G
\end{align*}
Checking all the properties is routine. In our proof, we will use the following two of them:
\begin{itemize}
	\item[(G1)] \emph{$G(1)$ is a normal subloop of $Q$}, since, for every $f\in \inn Q$ and $g\in G$, we have $fgf^{-1}\in G$ and thus $f(g(1))=fgf^{-1}(f(1))=fgf^{-1}(1)\in G(1)$;
	\item[(G2)] $G\subseteq G(1)^*$, since, for every $g\in G$ and $x \in Q$, we have $R_x^{-1}gR_x\in G$ and thus $g(x)/x=R_x^{-1}gR_x(1)\in G(1)$. 
\end{itemize}
Furthermore, observe that 
\begin{enumerate}
	\item[(G3)] \emph{$|G(1)|$ divides $|G|$}, since $|G(1)|=[G:G_1]$ (the orbit size equals the index of the stabilizer);
  \item[(G4)] \emph{if $N$ is a normal subloop of $Q$, then $\mlt{Q/N}\simeq\mlt Q/N^*$}: consider the homomorphism $\mlt Q\to\mlt{Q/N}$, $f\mapsto(xN\mapsto f(x)N)$, calculate its kernel $\{f:f(x)N=xN$ for all $x\in Q\}=N^*$ and use the first isomorphism theorem.
\end{enumerate}

\begin{proposition}[{\cite[Lemma 2.2 of Section VI.2]{Br-book}}]\label{p:pgroup}
Let $Q$ be a finite loop and $p$ a prime. Then $Q$ is nilpotent of order $p^k$ if and only if $\mlt Q$ is a $p$-group.
\end{proposition}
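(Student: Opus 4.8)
The plan is to prove the biconditional in two directions, using the Galois correspondence between normal subloops of $Q$ and normal subgroups of $\mlt Q$ together with the divisibility facts (G3) and (G4).

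\textbf{The easy direction ($\mlt Q$ a $p$-group $\Rightarrow$ $Q$ nilpotent of order $p^k$).} First, $|Q|=|\mlt Q(1)|$ divides $|\mlt Q|$ by (G3) applied with $G=\mlt Q$, hence $|Q|$ is a power of $p$. For nilpotence, I would show directly that if $Q$ is a nontrivial loop whose multiplication group is a $p$-group, then $Z(Q)\neq 1$, and then induct: by (G4), $\mlt{Q/Z(Q)}$ is a quotient of $\mlt Q$, hence again a $p$-group, so by induction $Q/Z(Q)$ is centrally nilpotent, and therefore so is $Q$. To see $Z(Q)\neq 1$: recall from the text that $a\in Z(Q)$ iff $f(a)=a$ for every $f\in\inn Q$, i.e. $Z(Q)$ is the set of fixed points of $\inn Q$ acting on $Q$. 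Now $\inn Q=\mlt Q_1$ is a subgroup of the $p$-group $\mlt Q$, hence itself a $p$-group; it acts on $Q$, whose size is a power of $p$, and it fixes $1$. Any orbit of $\inn Q$ on $Q$ has $p$-power size, so the number of fixed points is $\equiv |Q|\equiv 0 \pmod p$; since $1$ is a fixed point, there are at least $p$ fixed points, so $Z(Q)\neq 1$. (One must check $Z(Q)$ really is a subloop — this is standard loop theory and presumably available from \cite{Pfl}.)

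\textbf{The harder direction ($Q$ nilpotent of order $p^k$ $\Rightarrow$ $\mlt Q$ a $p$-group).} I proceed by induction on $k=\cl_{cn}(Q)$ together with $|Q|$. The key tool is to relate $\mlt Q$ to $\mlt{Q/Z(Q)}$ and to the piece of $\mlt Q$ lying above $1$. By (G4), $\mlt{Q/Z(Q)}\simeq \mlt Q/Z(Q)^*$, and by induction this quotient is a $p$-group, so it suffices to prove $Z(Q)^*$ is a $p$-group. Since $Z(Q)$ is central, one expects $Z(Q)^*=\{f\in\mlt Q: f(x)/x\in Z(Q)\text{ for all }x\}$ to be a particularly tame normal subgroup — in fact one shows it is abelian (or at least nilpotent), being built from translations by central elements and inner maps that act trivially on $Z(Q)$. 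Concretely, the map $Z(Q)^*\to Z(Q)$, $f\mapsto f(1)$, has image contained in $Z(Q)$ (a $p$-group by the size constraint), and I would identify its kernel $Z(Q)^*\cap\inn Q$ and argue it is a $p$-group as well. The cleanest route is: restrict attention to how elements of $Z(Q)^*$ act, show the commutator subgroup of $Z(Q)^*$ lies inside $\inn Q$ and acts trivially in a suitable sense, forcing $Z(Q)^*$ to be a finite $p$-group because all its composition factors are. Then $\mlt Q$ is an extension of the $p$-group $Z(Q)^*$ by the $p$-group $\mlt{Q/Z(Q)}$, hence a $p$-group.

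\textbf{Main obstacle.} The delicate point is the analysis of $Z(Q)^*$ in the hard direction: one must show it is a $p$-group without already knowing $\mlt Q$ is. This is precisely where the proofs in the literature (Bruck, Wright) do real work, and it is likely to require a careful computation with the generators $L_{x,y},R_{x,y},T_x$ of $\inn Q$ restricted to the subloop $Z(Q)$ and to cosets thereof, exploiting that central elements associate and commute with everything. I expect this step to occupy the bulk of the argument; the size/divisibility bookkeeping and the fixed-point counting in the easy direction are routine by comparison.
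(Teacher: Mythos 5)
Your easy direction is fine, and in fact more self-contained than the paper's: the paper simply invokes the already-proved inequality $\cl_{cn}(Q)\leq\cl_m(Q)$ to get nilpotence, whereas your fixed-point count for the $p$-group $\inn Q$ acting on $Q$ (giving $Z(Q)\neq 1$) reproves that special case from scratch. Both work.

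The hard direction, however, has a genuine gap at exactly the point you flag as the ``main obstacle'', and the routes you sketch for closing it do not work as stated. After reducing (correctly, and exactly as the paper does) to showing that $Z(Q)^*$ is a $p$-group, you propose the map $Z(Q)^*\to Z(Q)$, $f\mapsto f(1)$, and hope to handle the kernel $Z(Q)^*\cap\inn Q$ separately. But that kernel is genuinely nontrivial in general (already for groups of nilpotence class $\geq 2$ it corresponds to $Z_2(G)/Z(G)$), and nothing in your sketch shows it is a $p$-group; ``all its composition factors are'' is precisely what needs proof, and abelianness alone would not give it. The missing idea is to use the full displacement map rather than evaluation at $1$: for $f\in Z(Q)^*$ set $\varphi_f(x)=f(x)/x$, so that $\varphi_f\in Z(Q)^Q$. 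The identity $h(zx)=zh(x)=h(x)z$ for $z\in Z(Q)$ and $h\in\mlt Q$ yields
\[ \varphi_{fg}(x)=f(g(x))/x=f\bigl((g(x)/x)\cdot x\bigr)/x=(f(x)/x)(g(x)/x)=\varphi_f(x)\varphi_g(x), \]
so $f\mapsto\varphi_f$ is an \emph{injective} group homomorphism of $Z(Q)^*$ into the abelian $p$-group $Z(Q)^Q$ (here $|Z(Q)|$ is a $p$-power because it divides $|Q|=p^k$). Injectivity kills the kernel problem entirely. Without this (or an equivalent) computation, your induction does not close.
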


\begin{proof}
$(\Leftarrow)$ Since $p$-groups are nilpotent, $Q$ is nilpotent by the result of the previous section. If $m$ divides $|Q|$, then $m$ also divides $|\mlt Q|=|Q|\cdot|\inn Q|$, hence the only prime divisor of $|Q|$ is $p$.

$(\Rightarrow)$ We will proceed by induction on $\cl_{cn}(Q)$. If $Q$ is an abelian group, then $\mlt Q\simeq Q$ and we are done. In the induction step, consider the factor $Q/Z(Q)$. Its nilpotence class is smaller, hence $\mlt{Q/Z(Q)}$ is a $p$-group. Using (G4), we obtain $|\mlt Q|=|\mlt{Q/Z(Q)}|\cdot|Z(Q)^*|$, hence it remains to show that $Z(Q)^*$ is a $p$-group. 

For $f$ in $Z(Q)^*$, consider the mapping $\varphi_f(x)=f(x)/x$. By definition of $Z(Q)^*$, the mapping $\varphi_f$ maps $Q$ into $Z(Q)$, thus it belongs to the direct power $Z(Q)^Q$. 
The mapping $Z(Q)^*\to Z(Q)^Q$, $f\mapsto\varphi_f$ is clearly injective, and we show that it is a group homomorphism: for every $x\in Q$,
\[ \varphi_{fg}(x)=f(g(x))/x=f((g(x)/x)\cdot x)/x=(f(x)/x)(g(x)/x)=\varphi_f(x)\varphi_g(x),\] 
where the crucial equality follows from the observation that $h(zx)=zh(x)=h(x)z$ for every $x\in Q$, $z\in Z(Q)$ and $h\in\mlt Q$.
Consequently, $Z(Q)^*$ is isomorphic to a subgroup of $Z(Q)^Q$, which is an abelian $p$-group.
\end{proof}

\begin{proof}[Proof of $\cl_{m}(Q)<\infty\Rightarrow\cl_{sn}(Q)<\infty$]
Let $Q$ be a finite loop such that $\mlt Q$ is nilpotent. In particular, $Q$ is also nilpotent, as we proved earlier, and so are all subloops of $Q$. If we prove that $Q$ admits a prime decomposition, then supernilpotence follows from Theorem \ref{t:supnilp_fin}.

We find the prime decomposition by induction on $|Q|$. If $Q$ is trivial or $|Q|$ is a prime power, it admits trivial prime decomposition. So assume that $|Q|=p^er$ where $p$ is prime, $e\geq1$, $p\nmid r\neq1$. 

Under the assumptions, $\mlt Q$ is not a $p$-group, otherwise violating Proposition \ref{p:pgroup}. 
Let $\mlt Q=PR$ be an internal direct product where $P$ is the Sylow $p$-subgroup (it exists, since $\mlt Q$ is nilpotent). 
By (G1), both $P(1)$ and $R(1)$ are normal subloops of $Q$, and (G3) implies that $|P(1)|$ is a power of $p$, and $p$ does not divide $|R(1)|$. 
By the Lagrange property of normal subloops, $P(1)\cap R(1)=1$.
We will show that $Q=P(1)R(1)$. 

By (G2), $\mlt Q=PR\subseteq P(1)^*R(1)^*$, hence $P(1)^*R(1)^*=\mlt Q$. We also have $P(1)^* \cap R(1)^* = 1$, since, if $f \in P(1)^* \cap R(1)^*$, then $f(x)/x \in P(1)\cap R(1)=1$ for every $x\in Q$. 
Therefore, $\mlt Q=P(1)^*R(1)^*$ is also an internal direct decomposition, and comparing the numbers of elements, we obtain that $P(1)^*= P$ and $R(1)^*= R$.
Using (G4), \[\mlt{Q/R(1)}\simeq\mlt Q/R(1)^*=\mlt Q/R\simeq P, \] which is a $p$-group, hence $|Q/R(1)|$ is a power of $p$ by Proposition \ref{p:pgroup}, and thus $r$ divides $R(1)$.
Using (G4) again, \[\mlt{Q/P(1)}\simeq\mlt Q/P(1)^*=\mlt Q/P\simeq R, \] a nilpotent group, hence we can apply the induction assumption, obtain a direct decomposition $Q/P(1)\simeq \prod Q_{p_i}$, hence also a direct decomposition $R\simeq\mlt{Q/P(1)}\simeq\prod\mlt{Q_{p_i}}$, and we see from Proposition \ref{p:pgroup} that none of $p_i$ equals $p$. Therefore, $p$ does not divide $Q/P(1)$, and thus $|P(1)|=p^e$ (we already know that $|P(1)|$ is a power of $p$).
Consequently, $|P(1)R(1)|=|P(1)|\cdot|R(1)|\geq p^er=|Q|$ (the first equality follows from $P(1)\cap R(1)=1$), and thus $Q=P(1)R(1)$.

Consequently, $Q\simeq P(1)\times R(1)$ is a direct decomposition, $|P(1)|=p^e$, $|R(1)|=r<|Q|$. Applying the induction assumption on $R(1)\simeq Q/P(1)$, we obtain a prime decomposition of $Q$.
\end{proof}

\section{Open problems}

\begin{problem}
Does the equivalence \[ \cl_{sn}(Q)<\infty\quad\Leftrightarrow\quad \cl_{m}(Q)<\infty\] hold for every loop $Q$? Stated differently, is every loop $Q$ with nilpotent multiplication group supernilpotent?
\end{problem}

\begin{problem}\
\begin{enumerate}
	\item Find a function $f$ such that $\cl_{sn}(Q)\leq f(\cl_{cn}(Q))$ for every supernilpotent loop $Q$, or prove that no such function exists.
	\item Find a function $g$ such that $\cl_{sn}(Q)\leq g(\cl_m(Q))$ for every supernilpotent loop $Q$, or prove that no such function exists.
\end{enumerate}
%Both problems seem to be interesting even in the particular case of finite nilpotent loops of order $p^k$. 
Examples of loops $Q$ with small $\cl_{cn}(Q)$ or $\cl_{m}(Q)$ and large $\cl_{sn}(Q)$ are also of interest, since they provide lower bounds for $f,g$.
\end{problem}

\end{document}